\newtheorem{satz}{Proposition}[section]
\newtheorem{theorem}{Theorem}
\newtheorem{cor}[satz]{Corollary}
\title{On a Camassa-Holm type equation describing the dynamics of viscous fluid conduits}
\author[R. Granero-Belinch\'{o}n]{Rafael Granero-Belinch\'{o}n$^*$}
\email{rafael.granero@unican.es}
\address{Departamento  de  Matem\'aticas,  Estad\'istica  y  Computaci\'on,  Universidad  de Cantabria.  Avda.  Los  Castros  s/n,  Santander,  Spain.}
\begin{document}
\begin{abstract}
In this note we derive a new nonlocal and nonlinear dispersive equations capturing the main dynamics of a circular interface separating a light, viscous fluid rising buoyantly through a heavy, more viscous, miscible fluid at small Reynolds numbers. This equation that we termed the $g-$model shares some common structure with the Camassa-Holm equation but has additional nonlocal effects. For this new PDE we study the well-posedness together with the existence of periodic traveling waves. Furthermore, we also show some numerical simulations suggesting the finite time singularity formation.
\end{abstract}

\subjclass[2010]{35A09,35C07,35Q35}
\keywords{Well-posedness, Traveling waves, Camassa-Holm equatin, Dispersive equation}

%\ccode{2010 AMS Subject Classification: 35455, 35B41, 92C17}

\maketitle

\allowdisplaybreaks
\section{Introduction}
In this paper we study the following $g-$model equation 
\begin{equation}\label{model2}
g_t+2\mathcal{N}g=-\mathcal{N}(g^2)+2\mathcal{Q}(g_{zz} \mathcal{N}g- g\mathcal{N}g_{zz})  \text{ for }(z,t)\in\mathbb{R}\times[0,T]
\end{equation}
with 
$$
\widehat{\mathcal{Q}F}=\frac{1}{1+\xi^2}\hat{F}\text{ and }\widehat{\mathcal{N}F}=\widehat{\partial_z\mathcal{Q}F}=\frac{i\xi}{1+\xi^2}\hat{F}
$$
defined as multipliers in Fourier space. This equation appears as an asymptotic model of the so-called \emph{conduit equation} 
\begin{equation}\label{eq:conduit}
u_t+(u^2)_z-(u^2(u^{-1}u_t)_z)_z=0 \text{ for }(z,t)\in\mathbb{R}\times[0,T].
\end{equation}
The conduit equation models the evolution of a circular interface separating a light, viscous fluid rising buoyantly through a heavy, more viscous, miscible fluid at small Reynolds numbers. In this setting $u$ denotes the cross-sectional area of the region occupied by the lighter fluid ascending. The conduit equation \eqref{eq:conduit} has also been studied in connection with Geophysics where appears as a particular case of the so-called \emph{magma equation}
\begin{equation}\label{eq:magma}
u_t+(u^n)_z-(u^n(u^{-m}u_t)_z)_z=0.
\end{equation}
This equation describes the vertical motion of molten rock up a viscously deformable dyke in the earth's crust. Both equations have been subject of great efforts and a large number of works. In particular, we refer to the following non-exhaustive list \cite{ambrose2018existence,barcilon1989solitary,gavrilyuk2024conduit,
lowman2013dispersive,
maiden2016modulations
,mao2023experimental,olson1986solitary
,scott1984magma,scott1986magma,whitehead1986korteweg} and the (many other) references therein. Equation \eqref{model2} has a similar flavour to the well-known Camassa-Holm equation. In fact, if we replace $\mathcal{Q}$ by the identity, then we recover the CH equation (up to certain constants).

The purpose of this note is three-fold. First, we derive the $g-$model. This derivation is in the same spirit as the work by Takahashi, Sachs and Satsuma \cite{takahashi1990properties} where they find the KdV as an asymptotic model in a different regime. Then we study the well-posedness and finite time singularity formation. In particular we prove the following result:
\begin{theorem}\label{theorem1}
Let $g_0\in H^2(\mathbb{R})$ be a zero mean initial data. Then there exists a solution of \eqref{model2} satisfying
$$
g\in C([0,T],H^2(\mathbb{R}))
$$
for certain $0<T.$
\end{theorem}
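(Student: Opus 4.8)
The plan is to exploit the smoothing nature of the Fourier multipliers $\mathcal{Q}$ and $\mathcal{N}$ and to recast \eqref{model2} as an autonomous ordinary differential equation $g_t=F(g)$ in the Banach space $H^2(\mathbb{R})$, where $F(g)=-2\mathcal{N}g-\mathcal{N}(g^2)+2\mathcal{Q}(g_{zz}\mathcal{N}g-g\mathcal{N}g_{zz})$. Since $\widehat{\mathcal{Q}F}=(1+\xi^2)^{-1}\hat F$ and $\widehat{\mathcal{N}F}=i\xi(1+\xi^2)^{-1}\hat F$, the symbols decay like $\av{\xi}^{-2}$ and $\av{\xi}^{-1}$, so that $\mathcal{Q}:H^s\to H^{s+2}$ and $\mathcal{N}:H^s\to H^{s+1}$ are bounded for every $s$. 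The first thing I would record is therefore the precise mapping property of each summand: the linear term $\mathcal{N}g$ lands in $H^3\subset H^2$, and, using that $H^2(\mathbb{R})$ is a Banach algebra, $g^2\in H^2$ forces $\mathcal{N}(g^2)\in H^3\subset H^2$.

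For the quadratic nonlocal terms I would combine the one-dimensional embedding $H^1(\mathbb{R})\hookrightarrow L^\infty(\mathbb{R})$ with the two-derivative gain from the outer $\mathcal{Q}$. For the first, $g_{zz}\in L^2$ and $\mathcal{N}g\in H^3\hookrightarrow L^\infty$, so $\norm{g_{zz}\mathcal{N}g}_{L^2}\lesssim \norm{g}_{H^2}\norm{\mathcal{N}g}_{L^\infty}\lesssim\norm{g}_{H^2}^2$, and applying $\mathcal{Q}$ returns an element of $H^2$. For the second, $\mathcal{N}g_{zz}=\mathcal{N}\partial_z^2 g\in H^1$ because the symbol $\xi^3(1+\xi^2)^{-1}$ grows only linearly, whence $g\mathcal{N}g_{zz}\in H^1$ with $\norm{g\mathcal{N}g_{zz}}_{H^1}\lesssim\norm{g}_{H^2}^2$, and $\mathcal{Q}$ lifts this into $H^3\subset H^2$. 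As $F$ is the sum of a bounded linear map and bounded bilinear maps from $H^2$ into $H^2$, it is locally Lipschitz on $H^2(\mathbb{R})$; the Picard--Lindel\"of (Cauchy--Lipschitz) theorem for ODEs in Banach spaces then produces a unique maximal solution $g\in C^1([0,T],H^2(\mathbb{R}))$, in particular $g\in C([0,T],H^2(\mathbb{R}))$, on a time interval $T>0$ quantified by $\norm{g_0}_{H^2}$ (equivalently, via the a priori estimate $\tfrac12\tfrac{d}{dt}\norm{g}_{H^2}^2=\langle g,F(g)\rangle_{H^2}\lesssim\norm{g}_{H^2}^2+\norm{g}_{H^2}^3$ and Gronwall's inequality).

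The main obstacle is to confirm the exact derivative balance in the genuinely singular contribution $\mathcal{Q}(g_{zz}\mathcal{N}g-g\mathcal{N}g_{zz})$: here $g_{zz}$ costs two derivatives, which is compensated precisely by the two derivatives regained through $\mathcal{Q}$, provided the remaining factor retains enough regularity ($\mathcal{N}g\in L^\infty$, respectively $\mathcal{N}g_{zz}\in H^1$). This is exactly where the $H^2$ threshold and the one-derivative smoothing of $\mathcal{N}$ are essential, and working below $H^2$ would break the estimate. Finally I would verify that the zero-mean constraint persists, so that the argument may be run in the closed subspace of mean-free functions: the symbols of $\mathcal{N}g$ and $\mathcal{N}(g^2)$ vanish at $\xi=0$, while $\int_{\mathbb{R}}(g_{zz}\mathcal{N}g-g\mathcal{N}g_{zz})\,\dd z=0$ follows from two integrations by parts together with the commutation $\partial_z^2\mathcal{N}=\mathcal{N}\partial_z^2$; hence $\tfrac{d}{dt}\int_{\mathbb{R}}g\,\dd z=0$. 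A slightly more robust alternative would replace the fixed-point step by a Friedrichs mollification $\mathcal{J}_\epsilon$, uniform $H^2$ a priori bounds, and an Aubin--Lions compactness passage to the limit, but the smoothing structure makes the direct Picard argument the most economical.
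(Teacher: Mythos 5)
Your proposal is correct and follows exactly the route the paper takes: the paper's proof consists of the single observation that, thanks to the regularizing properties of $\mathcal{N}$ and $\mathcal{Q}$, the equation is an ODE in $H^2(\mathbb{R})$ to which the Picard theorem in Banach spaces applies, and your mapping bounds ($\mathcal{Q}:H^s\to H^{s+2}$, $\mathcal{N}:H^s\to H^{s+1}$, the $H^2$ algebra property, and the two-derivative gain of $\mathcal{Q}$ compensating $g_{zz}$) supply precisely the local Lipschitz estimate the paper leaves implicit. Your verification of the derivative balance in $\mathcal{Q}(g_{zz}\mathcal{N}g-g\mathcal{N}g_{zz})$ and of the persistence of the zero-mean constraint is a correct and welcome filling-in of details the paper omits.
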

We observe that the same result holds in the case where the domain is $[-\pi,\pi]$ with periodic boundary conditions.

The question of the existence of traveling waves for the full magma equation has been an object of study at least since the 90's \cite{takahashi1990properties}.

Finally, we prove the existence of periodic traveling waves:
\begin{theorem}\label{theorem2}
For any $1\leq m\in\mathbb{Z}$ there exists a one dimensional curve $s\mapsto (c_s, \varphi_s)$, with $s\in I$, such that
$$
\varphi_s(x)\in X_m:=\left\{f\in C^{2,\alpha}([0,2\pi]),\quad f(\xi)=\sum_{k\geq 1}f_k\cos(mk\xi)\text{ with norm }\|f\|_{X_m}=\|f\|_{C^{2,\alpha}}\right\},
$$
is a $m-$fold traveling wave solution to \eqref{model2} with constant speed $c_s$.
\end{theorem}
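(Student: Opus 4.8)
The plan is to realize the traveling waves as a local bifurcation from the trivial solution $\varphi\equiv 0$ and to invoke the Crandall--Rabinowitz theorem on bifurcation from a simple eigenvalue. First I would insert the ansatz $g(z,t)=\varphi(z-ct)$ into \eqref{model2}. Since $\mathcal{N}$ and $\mathcal{Q}$ are Fourier multipliers they commute with translations, so the profile equation becomes
\begin{equation*}
-c\varphi' + 2\mathcal{N}\varphi + \mathcal{N}(\varphi^2) - 2\mathcal{Q}\left(\varphi''\mathcal{N}\varphi - \varphi\mathcal{N}\varphi''\right)=0.
\end{equation*}
The crucial preliminary observation is that this equation integrates once: using $\mathcal{N}=\partial_z\mathcal{Q}$ together with the identity $\varphi''\mathcal{N}\varphi-\varphi\mathcal{N}\varphi''=\partial_z(\varphi'\mathcal{N}\varphi-\varphi\mathcal{N}\varphi')$, every term is a $z$-derivative, so integrating and fixing the constant by the zero-mean normalization yields the derivative-balanced equation
\begin{equation*}
\mathcal{F}(c,\varphi):=\mathbb{P}\left[-c\varphi+2\mathcal{Q}\varphi+\mathcal{Q}(\varphi^2)-2\mathcal{Q}\left(\varphi'\mathcal{N}\varphi-\varphi\mathcal{N}\varphi'\right)\right]=0,
\end{equation*}
where $\mathbb{P}$ is the projection onto zero-mean functions. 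I would check that $\mathcal{F}\colon\mathbb{R}\times X_m\to X_m$ is a well-defined smooth (indeed analytic) map: multiplication $C^{2,\alpha}\times C^{2,\alpha}\to C^{2,\alpha}$ is bounded, $\mathcal{N}$ and $\mathcal{Q}$ smooth by one and two derivatives respectively, and all operators preserve both evenness and the $m$-fold frequency support, so $X_m$ is invariant. By construction $\mathcal{F}(c,0)=0$ for every $c$, giving the trivial branch.

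Next I would linearize at $\varphi=0$. The quadratic terms drop out and, reading off the Fourier symbols on the basis $\{\cos(mk\xi)\}_{k\ge 1}$, the linearization is the multiplier
\begin{equation*}
\mathcal{L}_c:=D_\varphi\mathcal{F}(c,0)=-c\,\mathrm{Id}+2\mathcal{Q},\qquad \mathcal{L}_c\cos(mk\xi)=\left(-c+\frac{2}{1+(mk)^2}\right)\cos(mk\xi).
\end{equation*}
Since $2\mathcal{Q}$ is compact on $X_m$ (it smooths by two derivatives), $\mathcal{L}_c$ is a compact perturbation of $-c\,\mathrm{Id}$ and hence Fredholm of index zero for $c\ne 0$. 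The kernel is nontrivial exactly when $c=2/(1+(mk)^2)$ for some $k\ge 1$; as $k\mapsto 2/(1+(mk)^2)$ is strictly decreasing, the candidate value $c_*=2/(1+m^2)$ (from $k=1$) produces a one-dimensional kernel $\mathrm{span}\{\cos(m\xi)\}$, and the range of $\mathcal{L}_{c_*}$ is the one-codimensional subspace $\overline{\mathrm{span}}\{\cos(mk\xi):k\ge 2\}$ missing exactly $\cos(m\xi)$.

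Finally I would verify the transversality condition. Since $\partial_c\mathcal{L}_c=-\mathrm{Id}$, one has $\partial_c\mathcal{L}_{c_*}[\cos(m\xi)]=-\cos(m\xi)$, which does not belong to the range of $\mathcal{L}_{c_*}$; this is precisely the Crandall--Rabinowitz non-degeneracy hypothesis. The theorem then delivers a $C^1$ curve $s\mapsto(c_s,\varphi_s)$ with $c_0=c_*$ and $\varphi_s=s\cos(m\xi)+o(s)$ in $X_m$, consisting of nontrivial zeros of $\mathcal{F}$, i.e. $m$-fold periodic traveling waves with speed $c_s$, which is the assertion.

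The main obstacle is the functional-analytic setup rather than the bifurcation computation, which is elementary once in place. The two points requiring care are: (i) the reduction to the integrated equation, where I must confirm that zeros of $\mathcal{F}$ (after the zero-mean projection) correspond exactly to profile solutions of the original equation, the integration constant being absorbed by $\mathbb{P}$; and (ii) the regularity bookkeeping certifying that $\mathcal{F}$ maps $X_m$ into $X_m$ with a Fredholm linearization, which hinges on processing $-c\partial_z$ through the integration so that no net derivatives are lost and $\mathcal{L}_c$ becomes identity-plus-compact. Once these are secured, simplicity of the kernel and transversality follow from the explicit symbols, and Crandall--Rabinowitz closes the argument.
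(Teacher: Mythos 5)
Your proposal is correct, but it takes a genuinely different route from the paper. The paper applies Crandall--Rabinowitz directly to the \emph{unintegrated} profile operator $F[c,\varphi]=-c\varphi'+2\mathcal{N}\varphi+\mathcal{N}(\varphi^2)-2\mathcal{Q}(\varphi''\mathcal{N}\varphi-\varphi\mathcal{N}\varphi'')$, viewed as a map between two \emph{different} spaces: the even cosine space $X\subset C^{2,\alpha}$ and the odd sine space $Y\subset C^{1,\alpha}$ (using the parity flip of $\partial_\xi$ and $\mathcal{N}$, together with $\mathcal{N}(\varphi^2)=\mathcal{Q}(2\varphi\varphi')$, to see that even inputs give odd outputs). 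Fredholmness of index zero is then obtained from the splitting $L=L^1+L^2$ with $L^1=-c\,\partial_\xi:X\to Y$ an isomorphism and $L^2=2\mathcal{N}:X\to Z\subset\subset Y$ compact via the embedding $Z=C^{3,\alpha}$-sine-space $\hookrightarrow Y$. You instead exploit that every term of the profile equation is an exact derivative --- your identity $\varphi''\mathcal{N}\varphi-\varphi\mathcal{N}\varphi''=\partial_z\left(\varphi'\mathcal{N}\varphi-\varphi\mathcal{N}\varphi'\right)$ is correct by Leibniz, since $\partial_z$ commutes with $\mathcal{N}$ and the cross terms $\varphi'\mathcal{N}\varphi'$ cancel --- and integrate once, projecting out the mean, to get a self-map $\mathcal{F}:\mathbb{R}\times X_m\to X_m$ whose linearization $-c\,\mathrm{Id}+2\mathcal{Q}$ is identity-plus-compact, making the index-zero Fredholm property immediate and dispensing with the two-space, parity-switching setup. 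You also correctly flag and resolve the one genuine subtlety of your route: on the torus, $\mathcal{F}(c,\varphi)=0$ with the zero-mean projection is equivalent to the original equation because a periodic function with vanishing derivative is constant and the projection absorbs that constant. Both routes produce the identical spectral computation (simple kernel $\mathrm{span}\{\cos(m\xi)\}$ at $c_*=2/(1+m^2)$, matching the paper's $c_n=2/(1+n^2)$, with transversality from $\partial_c$ of the symbol), so the bifurcation conclusion is the same; your formulation buys a cleaner functional-analytic frame at the cost of the integration/equivalence step, while the paper's avoids that step at the cost of tracking the derivative loss and parity between $X$ and $Y$.
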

In particular, although the previous theorem ensures that there exist infinitely many traveling wave solutions for the $g-$model, the proof is not constructive. Thus, the specific shape of the traveling waves remains non-explicit.

\section{Derivation of the g-model for the conduit}
We assume that 
$$
u(z,t)=1+\varepsilon h(z,t)
$$
and $0<\varepsilon\ll 1$. In this case, \eqref{eq:conduit} reads
%\begin{equation*}
%h_t+2(1+\varepsilon h)h_z-((1+\varepsilon h)^2((1+\varepsilon h)^{-1}h_t)_z)_z=0.
%\end{equation*}
%We can compute
%\begin{align*}
%%h_t&=-2(1+\varepsilon h)h_z+((1+\varepsilon h)^2(-(1+\varepsilon h)^{-2}\varepsilon h_z h_t+(1+\varepsilon h)^{-1}h_{tz}))_z\\
%&=-2(1+\varepsilon h)h_z+(-\varepsilon h_z h_t+(1+\varepsilon h)h_{tz})_z\\
%h_t&=-2(1+\varepsilon h)h_z-\varepsilon h_z h_{tz}-\varepsilon h_{zz} h_t+\varepsilon h_zh_{tz}+(1+\varepsilon h)h_{tzz}.
%\end{align*}
%Then, \eqref{eq:conduit} can equivalently be written as
\begin{equation}\label{eq:full}
h_t-h_{tzz}+2h_z=-2\varepsilon hh_z-\varepsilon h_{zz} h_t+\varepsilon hh_{tzz}.
\end{equation}
We make the ansatz
\begin{equation*}
h(z,t)=\sum_{\ell=0}^\infty \varepsilon^\ell h^{(\ell)}(z,t).
\end{equation*}
As a consequence, we obtain the following cascade of linear partial differential equations:
\begin{equation*}
h^{(\ell)}_t-h^{(\ell)}_{tzz}+2h^{(\ell)}_z=-2\sum_{j=0}^{\ell-1} h^{(j)}h_z^{(\ell-j-1)}-\sum_{j=0}^{\ell-1}h^{(j)}_{zz} h_t^{(\ell-j-1)}+ \sum_{j=0}^{\ell-1}h^{(j)}h_{tzz}^{(\ell-j-1)}.
\end{equation*}
This cascade of linear problems can be solved for recursively. In particular, we have that the first two terms in this cascade verify
$$
h^{(0)}_t-h^{(0)}_{tzz}+2h^{(0)}_z=0,
%$$
%h^{(0)}(z,0)=h_0(z),
%$$
h^{(1)}_t-h^{(1)}_{tzz}+2h^{(1)}_z=-2 h^{(0)}h_z^{(0)}-h^{(0)}_{zz} h_t^{(0)}+ h^{(0)}h_{tzz}^{(0)}.
$$
%$$
%h^{(1)}(z,0)=0.
%$$
We observe that we can equivalently write
$$
h^{(0)}_t=-2\mathcal{N}h^{(0)}.
$$
As a consequence, we can also obtain
$$
h^{(1)}_t-h^{(1)}_{tzz}+2h^{(1)}_z=-2 h^{(0)}h_z^{(0)}+2h^{(0)}_{zz} \mathcal{N}h^{(0)}-2 h^{(0)}\mathcal{N}h_{zz}^{(0)}.
$$
If we define now
$$
g(z,t)=h^{(0)}(z,t)+\varepsilon h^{(1)}(z,t),
$$
we conclude that, up to $O(\varepsilon^2)$ corrections, $g$ solves 
\begin{equation}\label{model}
g_t-g_{tzz}+2g_z=-2 gg_z+2g_{zz} \mathcal{N}g-2 g\mathcal{N}g_{zz}.
\end{equation}
We observe that this model can also be written as \eqref{model2}.
%$$
%g_t+2\mathcal{N}g=-\mathcal{N}(g^2)+2\mathcal{Q}(g_{zz} \mathcal{N}g- g\mathcal{N}g_{zz}).
%$$
We call this model the $g-$model of the conduit equation.

In our opinion, this equation deserves attention by itself as its mathematical properties are unknown and the behaviour is far from trivial as the numerics below evidence. Furthermore, we think that the mathematical properties of this system can serve to further understand the full conduit and magma equation in the non-vacuum regime. In particular, the existence of traveling wave solutions is a shared property between the $g-$ model and the full conduit equation. Furthermore, we think that the actual shape of such traveling waves should be very similar. However, whether the $g-$ equation is a so-called \emph{fully justified} asymptotic model is still an open question that requires its own study in the spirit of the works that do so for models of water waves.

\section{Well-posedness and numerical evidence of finite time singularities}
This $g-$model is locally well-posed in Sobolev spaces.
\begin{proof}[Proof of Theorem \ref{theorem1}]
One could prove this result using the ideas as in \cite{ambrose2018existence}. However, it can be noted that, using the regularizing properties of $\mathcal{N}$ and $\mathcal{Q}$, the Picard theorem in Banach spaces can be directly applied leading to the local existence of classical solution.
\end{proof}

Furthermore, the same result holds in the periodic case. In fact, one can similarly prove

\begin{cor}\label{theorem1b}
Let $g_0\in H^2(\mathbb{T})$ be a zero mean initial data. Then there exists a solution of \eqref{model} satisfying
$$
g\in C([0,T],H^2(\mathbb{T}))
$$
for certain $0<T.$
\end{cor}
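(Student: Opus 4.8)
The plan is to recast \eqref{model}, written in the form \eqref{model2}, as an abstract ODE $g_t=F(g)$ with $g(0)=g_0$ posed in the Banach space
$$
Y:=\set{f\in H^2(\mathbb{T}) : \int_{\mathbb{T}} f\,\dd z=0},\qquad F(g)=-2\mathcal{N}g-\mathcal{N}(g^2)+2\mathcal{Q}\pare{g_{zz}\mathcal{N}g-g\mathcal{N}g_{zz}},
$$
and to verify the hypotheses of the Cauchy--Lipschitz (Picard) theorem in Banach spaces exactly as in the proof of Theorem \ref{theorem1}. The only genuinely new points relative to that proof are that Parseval's identity on $\mathbb{T}$ is a sum over $\mathbb{Z}$ rather than an integral over $\mathbb{R}$, and that the zero--mean constraint defining $Y$ must be shown to be preserved by $F$.

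First I would record the mapping properties of the two multipliers on the torus. Since $\widehat{\mathcal{Q}f}(k)=(1+k^2)^{-1}\hat f(k)$ and $\widehat{\mathcal{N}f}(k)=ik(1+k^2)^{-1}\hat f(k)$ for $k\in\mathbb{Z}$, Parseval immediately yields that $\mathcal{Q}\colon H^s(\mathbb{T})\to H^{s+2}(\mathbb{T})$ and $\mathcal{N}\colon H^s(\mathbb{T})\to H^{s+1}(\mathbb{T})$ are bounded for every $s$. I would then check term by term that $F$ maps $H^2(\mathbb{T})$ into itself, using that $H^2(\mathbb{T})$ is a Banach algebra (the domain is compact, so no decay issue arises) and that $H^2(\mathbb{T})\hookrightarrow C^1(\mathbb{T})$. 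The linear term $\mathcal{N}g$ and the term $\mathcal{N}(g^2)$ are harmless, since $g^2\in H^2$ and $\mathcal{N}$ gains a derivative. The only term requiring attention is the nonlocal nonlinearity: $g_{zz}\in L^2$ while $\mathcal{N}g\in H^3\hookrightarrow L^\infty$, so $g_{zz}\mathcal{N}g\in L^2$, and similarly $g\in L^\infty$ with $\mathcal{N}g_{zz}\in H^1$ gives $g\mathcal{N}g_{zz}\in L^2$; applying $\mathcal{Q}$ then recovers the two derivatives lost in $g_{zz}$ and returns the whole expression to $H^2(\mathbb{T})$.

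Next I would establish local Lipschitz continuity of $F$ on balls of $Y$. Writing $F(g)-F(\tilde g)$ and expanding each quadratic term as a sum of products in which exactly one factor is a difference, the same algebra and smoothing bounds give $\norm{F(g)-F(\tilde g)}_{H^2}\leq C(R)\norm{g-\tilde g}_{H^2}$ on $\set{\norm{g}_{H^2}\leq R}$. It then remains to verify that $F$ preserves the zero--mean constraint. The outputs of $\mathcal{N}$ always have vanishing $k=0$ Fourier coefficient because the symbol $ik(1+k^2)^{-1}$ vanishes at $k=0$, so $\mathcal{N}g$ and $\mathcal{N}(g^2)$ are mean--free. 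For the remaining term, the mean is unchanged by $\mathcal{Q}$ (its symbol equals $1$ at $k=0$), and since $\mathcal{N}$ commutes with $\partial_z$, two integrations by parts on $\mathbb{T}$ give $\int_{\mathbb{T}}g_{zz}\mathcal{N}g\,\dd z=\int_{\mathbb{T}}g\,\mathcal{N}g_{zz}\,\dd z$, so $\int_{\mathbb{T}}(g_{zz}\mathcal{N}g-g\mathcal{N}g_{zz})\,\dd z=0$ and the expression is mean--free. Hence $F\colon Y\to Y$ is locally Lipschitz, and the Picard theorem furnishes a unique $g\in C^1([0,T],Y)\subset C([0,T],H^2(\mathbb{T}))$ for some $T>0$.

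The bulk of the work is routine once the derivative count is settled, so the main obstacle is purely bookkeeping: one must confirm that the two derivatives produced by $g_{zz}$ in the nonlocal term are exactly compensated by the order $-2$ smoothing of $\mathcal{Q}$, so that no net loss of regularity occurs and the nonlinearity genuinely maps $H^2$ into $H^2$ rather than into a rougher space. Everything else is identical to the case of the line $\mathbb{R}$ treated in Theorem \ref{theorem1}, with integrals over $\mathbb{R}$ replaced by sums over $\mathbb{Z}$; the preservation of the zero--mean condition is the only ingredient specific to the periodic setting.
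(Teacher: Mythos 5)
Your proposal is correct and follows essentially the same route as the paper, which likewise treats the periodic case by applying the Picard theorem in Banach spaces, exploiting the smoothing of the multipliers $\mathcal{Q}$ (order $-2$) and $\mathcal{N}$ (order $-1$) so that the nonlinearity maps $H^2(\mathbb{T})$ locally Lipschitz-continuously into itself. You merely spell out the bookkeeping (derivative count, $H^2$ algebra property, preservation of the zero-mean constraint) that the paper leaves implicit.
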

and, using again Picard theorem now in H\"older spaces,`
\begin{cor}\label{theorem1c}
Let $0<\alpha<1$ and $g_0\in C^{2,\alpha}(\mathbb{R})$ be a zero mean initial data. Then there exists a solution of \eqref{model} satisfying
$$
g\in C([0,T],C^{2,\alpha}(\mathbb{R}))
$$
for certain $0<T.$
\end{cor}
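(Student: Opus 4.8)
The plan is to prove local well-posedness via the abstract Picard theorem on a Banach space, exactly as indicated for Theorem 1 and Corollary 2, but now working in the Hölder scale $C^{2,\alpha}(\mathbb{R})$. First I would rewrite the equation in the fixed-point form $g_t = F(g)$ by moving everything to the right-hand side, using the formulation \eqref{model2}:
\begin{equation*}
g_t = -2\mathcal{N}g - \mathcal{N}(g^2) + 2\mathcal{Q}\bigl(g_{zz}\,\mathcal{N}g - g\,\mathcal{N}g_{zz}\bigr) =: F(g).
\end{equation*}
The whole strategy rests on the observation that $\mathcal{Q}$ and $\mathcal{N}$ are smoothing: their Fourier multipliers $(1+\xi^2)^{-1}$ and $i\xi(1+\xi^2)^{-1}$ are bounded and decay like $\xi^{-2}$ and $\xi^{-1}$ respectively, so $\mathcal{Q}$ gains two derivatives and $\mathcal{N}$ gains one. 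The aim is to show that $F:C^{2,\alpha}(\mathbb{R})\to C^{2,\alpha}(\mathbb{R})$ is locally Lipschitz, after which the Picard--Lindelöf theorem in the Banach space $X=C^{2,\alpha}(\mathbb{R})$ yields a unique local solution $g\in C^1([0,T],X)\subset C([0,T],C^{2,\alpha}(\mathbb{R}))$ on some time interval $[0,T]$.

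Next I would verify the mapping and Lipschitz properties term by term. The linear term $-2\mathcal{N}g$ gains one derivative, so it maps $C^{2,\alpha}$ into $C^{3,\alpha}\hookrightarrow C^{2,\alpha}$ and is trivially Lipschitz. For $-\mathcal{N}(g^2)$, the product $g^2$ lies in $C^{2,\alpha}$ by the Banach-algebra property of $C^{2,\alpha}(\mathbb{R})$, and $\mathcal{N}$ then keeps us safely in that space; local Lipschitz continuity follows from the bilinear estimate $\|g^2-\tilde g^2\|_{C^{2,\alpha}}\lesssim(\|g\|_{C^{2,\alpha}}+\|\tilde g\|_{C^{2,\alpha}})\|g-\tilde g\|_{C^{2,\alpha}}$. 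The delicate term is the commutator-type nonlinearity $2\mathcal{Q}(g_{zz}\mathcal{N}g - g\mathcal{N}g_{zz})$: naively $g_{zz}\in C^{0,\alpha}$ and $\mathcal{N}g_{zz}\in C^{1,\alpha}$, so the products $g_{zz}\mathcal{N}g$ and $g\mathcal{N}g_{zz}$ sit only in $C^{0,\alpha}$, and a single application of $\mathcal{Q}$ (two gained derivatives) lands back in $C^{2,\alpha}$. Thus each term is individually bounded in the target space, and the corresponding bilinear Lipschitz bounds again follow from the algebra property together with boundedness of $\mathcal{Q}$ and $\mathcal{N}$ on the Hölder scale.

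The main obstacle I anticipate is precisely the boundedness of the operators $\mathcal{Q}$ and $\mathcal{N}$ on Hölder spaces, since Fourier multipliers are naturally controlled on $L^2$-based Sobolev spaces but are more subtle on $C^{k,\alpha}$. I would handle this by noting that $\mathcal{Q}=(1-\partial_z^2)^{-1}$ is convolution with the Bessel-type kernel $\tfrac12 e^{-|z|}$, which is integrable with integrable derivatives; standard Young-type and Schauder estimates then give $\|\mathcal{Q}F\|_{C^{k+2,\alpha}}\lesssim\|F\|_{C^{k,\alpha}}$ and $\|\mathcal{N}F\|_{C^{k+1,\alpha}}\lesssim\|F\|_{C^{k,\alpha}}$, either directly from the explicit kernel or by invoking elliptic Schauder theory for $1-\partial_z^2$. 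Once these two smoothing estimates are established, the remaining verification that $F$ is locally Lipschitz is routine, and Picard's theorem closes the argument. Finally, since every step — the Banach-algebra property of $C^{2,\alpha}$, the kernel representation, and the abstract Picard theorem — is insensitive to the zero-mean normalization and the constants involved, the conclusion holds as stated, giving $g\in C([0,T],C^{2,\alpha}(\mathbb{R}))$ for some $T>0$.
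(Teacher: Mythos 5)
Your proposal is correct and follows essentially the same route as the paper, which proves this corollary by applying the Picard theorem in the Banach space $C^{2,\alpha}(\mathbb{R})$, relying on the regularizing properties of $\mathcal{Q}$ and $\mathcal{N}$. You simply make explicit the details the paper leaves implicit — the fixed-point formulation $g_t=F(g)$, the smoothing estimates $\|\mathcal{Q}F\|_{C^{k+2,\alpha}}\lesssim\|F\|_{C^{k,\alpha}}$ and $\|\mathcal{N}F\|_{C^{k+1,\alpha}}\lesssim\|F\|_{C^{k,\alpha}}$ via the kernel $\tfrac12 e^{-|z|}$ (or Schauder theory, as the paper itself invokes later for the traveling-wave analysis), and the Banach-algebra property giving the local Lipschitz bounds — and these details are all sound.
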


Using a Fourier collocation spectral method to discretize the spatial variable together with the standard \emph{ode45} Runge-Kutta time integrator we can approximate the space periodic analog of \eqref{model2}. In particular, for $N=2^{13}$ spatial notes and initial data $g_0(x)=\sin(x)\text{ and }g_{0}(x)=\sin(x)\cos(x),$
we find the following
\begin{figure}[h!]
\includegraphics[scale=0.15]{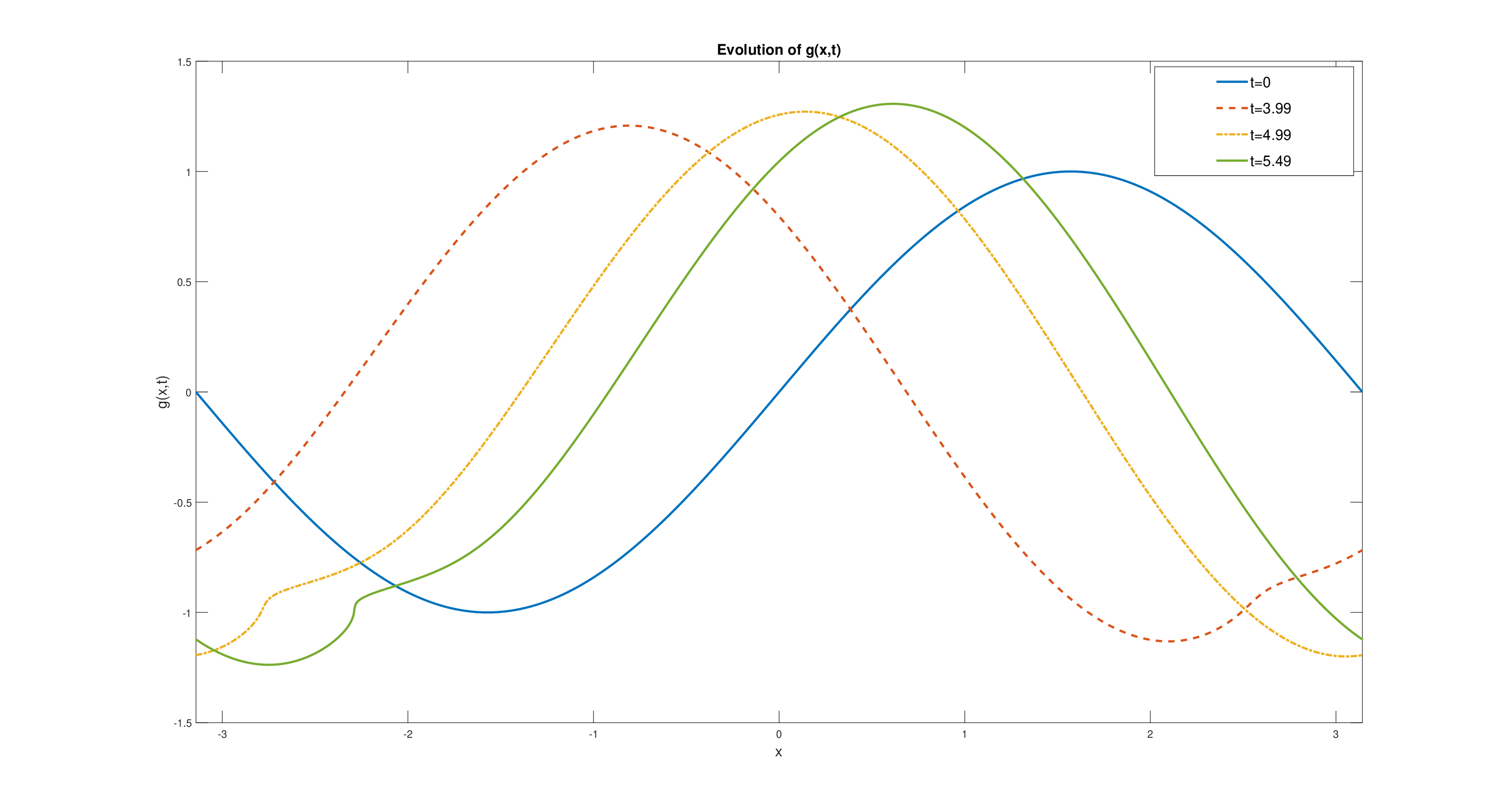}
%\end{figure}
%\begin{figure}[h!]
\includegraphics[scale=0.15]{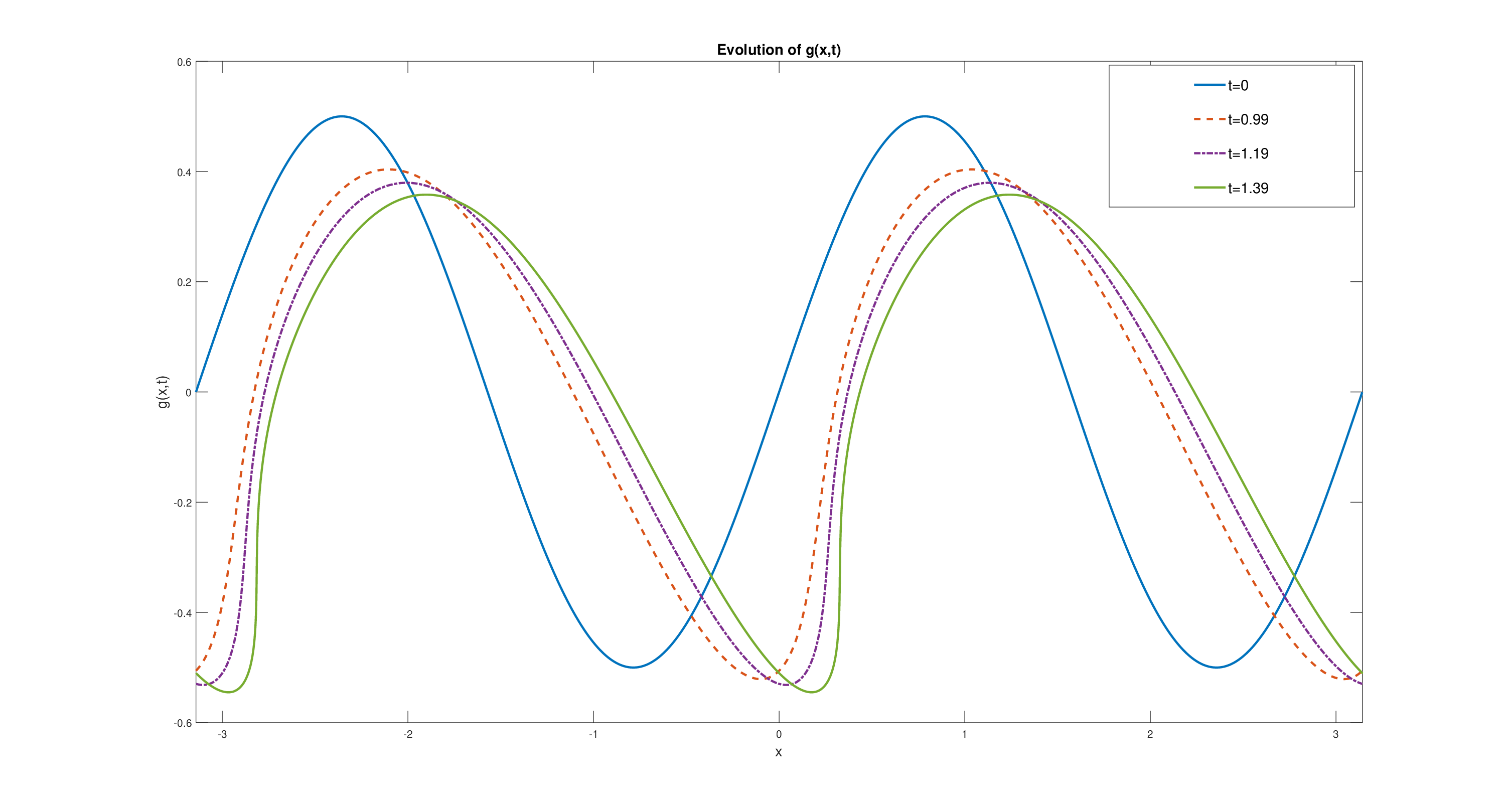}
\end{figure}

Let us remark that it is however unclear whether the solution blows up in finite time and that the previous numerical simulations are not conclusive. Furthermore, as noted in \cite{lowman2013dispersive}

\section{Existence of periodic traveling waves}
In this section we prove Theorem \ref{theorem2}. We start with certain concepts and statements required in bifurcation theory. Let $X$ and $Y$ be two Banach spaces. A continuous linear mapping $T:X\rightarrow Y,$  is a  Fredholm operator if it fulfills the following properties,
\begin{enumerate}
\item $\textnormal{dim Ker}\,  T<\infty$,
\item $\textnormal{Im}\, T$ is closed in $Y$,
\item $\textnormal{codim Im}\,  T<\infty$.
\end{enumerate}
The integer $\textnormal{dim Ker}\, T-\textnormal{codim Im}\, T$ is called the Fredholm index of $T$. We also remark that the index of a Fredholm operator remains unchanged under compact perturbations.

We now state the Crandall-Rabinowitz theorem \cite{crandall1971bifurcation}:
\begin{theorem}[Crandall-Rabinowitz Theorem]\label{CR}
    Let $X, Y$ be two Banach spaces, $V$ be a neighborhood of $0$ in $X$ and $F:\mathbb{R}\times V\rightarrow Y$ be a function with the properties,
    \begin{enumerate}
        \item $F(\lambda,0)=0$ for all $\lambda\in\mathbb{R}$.
        \item The partial derivatives  $\partial_\lambda F$, $\partial_fF$ and  $\partial_{\lambda}\partial_fF$ exist and are continuous.
        \item The operator $\partial_f F(\lambda_{0},0)$ is Fredholm of zero index and $\textnormal{Ker}(\partial_f F(\lambda_{0},0))=\langle f_0\rangle$ is one-dimensional. 
                \item  Transversality assumption: $\partial_{\lambda}\partial_fF(\lambda_{0},0)f_0 \notin \textnormal{Im}(\partial_fF(\lambda_{0},0))$.
    \end{enumerate}
    If $Z$ is any complement of  $\textnormal{Ker}(\partial_fF(\lambda_{0},0))$ in $X$, then there is a neighborhood  $U$ of $(\lambda_{0},0)$ in $\mathbb{R}\times X$, an interval  $(-a,a)$, and two continuous functions $\Phi:(-a,a)\rightarrow\mathbb{R}$, $\beta:(-a,a)\rightarrow Z$ such that $\Phi(0)=\lambda_{0}$ and $\beta(0)=0$ and
    $$F^{-1}(0)\cap U=\{(\Phi(s), s f_0+s\beta(s)) : |s|<a\}\cup\{(\lambda,0): (\lambda,0)\in U\}.$$
\end{theorem}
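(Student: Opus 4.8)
The plan is to prove this through the classical Lyapunov--Schmidt reduction, which collapses the infinite-dimensional equation $F(\lambda,f)=0$ into a single scalar equation in the two real parameters $(\lambda,s)$, followed by two applications of the implicit function theorem. Write $L_0:=\partial_f F(\lambda_0,0)$. Since $L_0$ is Fredholm of index zero with $\mathrm{Ker}(L_0)=\langle f_0\rangle$ one-dimensional, its cokernel $W$ is also one-dimensional; I fix topological complements so that $X=\langle f_0\rangle\oplus Z$ and $Y=\mathrm{Im}(L_0)\oplus W$, and let $P:Y\to\mathrm{Im}(L_0)$ and $Q=I-P:Y\to W$ be the associated continuous projections. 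The restriction $L_0|_Z:Z\to\mathrm{Im}(L_0)$ is then a Banach-space isomorphism.

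First I would split $F(\lambda,f)=0$ into the pair $PF=0$, $QF=0$, and parametrize $f=sf_0+z$ with $z\in Z$. The map $G(\lambda,s,z):=PF(\lambda,sf_0+z)$ satisfies $G(\lambda,0,0)=0$ and has $\partial_z G(\lambda_0,0,0)=PL_0|_Z=L_0|_Z$, an isomorphism onto $\mathrm{Im}(L_0)$. The implicit function theorem therefore yields a continuously differentiable solution $z=\psi(\lambda,s)$ near $(\lambda_0,0)$ with $\psi(\lambda,0)=0$. Differentiating $PF(\lambda,sf_0+\psi)=0$ in $s$ at $(\lambda_0,0)$ and using $L_0 f_0=0$ gives $L_0\,\partial_s\psi(\lambda_0,0)=0$, hence $\partial_s\psi(\lambda_0,0)\in Z\cap\mathrm{Ker}(L_0)=\{0\}$.

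Substituting $z=\psi(\lambda,s)$ into the remaining equation produces the scalar bifurcation function $\Psi(\lambda,s):=QF(\lambda,sf_0+\psi(\lambda,s))\in W$. Because $F(\lambda,0)=0$ and $\psi(\lambda,0)=0$, one has $\Psi(\lambda,0)=0$ for all $\lambda$, so I can factor out the trivial branch by writing $\Psi(\lambda,s)=s\,\widetilde\Psi(\lambda,s)$ with $\widetilde\Psi(\lambda,s)=\int_0^1\partial_s\Psi(\lambda,\sigma s)\,d\sigma$; the nontrivial solutions are exactly the zeros of $\widetilde\Psi$. A direct computation gives $\widetilde\Psi(\lambda_0,0)=\partial_s\Psi(\lambda_0,0)=QL_0[f_0+\partial_s\psi(\lambda_0,0)]=0$, since the bracket lands in $\mathrm{Im}(L_0)=\mathrm{Ker}(Q)$. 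Differentiating once more in $\lambda$, the terms carrying $L_0$ are annihilated by $Q$ and the term carrying $\partial_s\psi(\lambda_0,0)=0$ drops, leaving $\partial_\lambda\widetilde\Psi(\lambda_0,0)=Q\,\partial_\lambda\partial_f F(\lambda_0,0)f_0$.

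This is where the transversality hypothesis does the decisive work: assumption (4) states precisely that $\partial_\lambda\partial_f F(\lambda_0,0)f_0\notin\mathrm{Im}(L_0)=\mathrm{Ker}(Q)$, so $\partial_\lambda\widetilde\Psi(\lambda_0,0)\neq0$ in the one-dimensional space $W$. A final application of the implicit function theorem to $\widetilde\Psi(\lambda,s)=0$ then solves $\lambda=\Phi(s)$ with $\Phi(0)=\lambda_0$. Tracing back, every nontrivial zero of $F$ near $(\lambda_0,0)$ has the form $f=sf_0+\psi(\Phi(s),s)$; setting $\beta(s)=\int_0^1\partial_s\psi(\Phi(s),\sigma s)\,d\sigma$ gives $\psi(\Phi(s),s)=s\beta(s)$ with $\beta(0)=\partial_s\psi(\lambda_0,0)=0$, which is exactly the claimed parametrization, while the trivial branch accounts for the second component of $F^{-1}(0)\cap U$. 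The main obstacle, and the step demanding the most care, is the bookkeeping of the reduction under the weak regularity assumed here (only $\partial_\lambda F$, $\partial_f F$ and $\partial_\lambda\partial_f F$ continuous): one must verify that this suffices for the two implicit-function-theorem applications and for the factorization $\Psi=s\widetilde\Psi$ to yield genuinely continuous $\Phi$ and $\beta$, and that the identification of $\partial_\lambda\widetilde\Psi(\lambda_0,0)$ with $Q\,\partial_\lambda\partial_f F(\lambda_0,0)f_0$ is independent of the chosen complement $Z$.
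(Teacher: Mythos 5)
The paper itself gives no proof of this statement: Theorem \ref{CR} is quoted as a known result with a citation to Crandall--Rabinowitz \cite{crandall1971bifurcation}, so your attempt must be measured against the classical argument. Your Lyapunov--Schmidt scheme is the standard textbook route, and its algebraic skeleton is correct: the splittings $X=\langle f_0\rangle\oplus Z$, $Y=\mathrm{Im}(L_0)\oplus W$ with $\dim W=1$ from the index-zero Fredholm hypothesis, the first implicit-function-theorem step giving $z=\psi(\lambda,s)$ with $\psi(\lambda,0)=0$ and $\partial_s\psi(\lambda_0,0)\in Z\cap\mathrm{Ker}(L_0)=\{0\}$, the factorization $\Psi=s\widetilde\Psi$, the vanishing $\widetilde\Psi(\lambda_0,0)=QL_0f_0=0$, and the use of transversality to get $\partial_\lambda\widetilde\Psi(\lambda_0,0)=Q\,\partial_\lambda\partial_fF(\lambda_0,0)f_0\neq0$ in $W$ are all as they should be.

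However, the difficulty you flag at the end is not mere bookkeeping: under hypothesis (2) of the theorem only $\partial_\lambda F$, $\partial_f F$ and $\partial_\lambda\partial_f F$ are assumed continuous, and in particular $\partial_f^2F$ need not exist. Your final implicit-function-theorem application to $\widetilde\Psi(\lambda,s)=0$ requires $\partial_\lambda\widetilde\Psi$ to exist and be continuous in a \emph{neighborhood} of $(\lambda_0,0)$, not just the value at the bifurcation point. But $\partial_s\Psi(\lambda,s)=Q\,\partial_fF\bigl(\lambda,u(\lambda,s)\bigr)\bigl(f_0+\partial_s\psi(\lambda,s)\bigr)$ with $u(\lambda,s)=sf_0+\psi(\lambda,s)$, and differentiating this in $\lambda$ by the chain rule produces both $\partial_f^2F$ (through the $\lambda$-dependence of $u$) and $\partial_\lambda\partial_s\psi$, neither of which is available from the stated hypotheses; so the identity $\partial_\lambda\widetilde\Psi(\lambda_0,0)=Q\,\partial_\lambda\partial_fF(\lambda_0,0)f_0$ cannot simply be ``computed'' as you do, and the second IFT step is genuinely blocked. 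This is exactly the point where the original proof diverges from yours: Crandall and Rabinowitz desingularize \emph{before} reducing, setting $f=s(f_0+z)$ and $g(s,\lambda,z):=F(\lambda,s(f_0+z))/s=\int_0^1\partial_fF\bigl(\lambda,\sigma s(f_0+z)\bigr)(f_0+z)\,d\sigma$ for $s\neq0$, extended by $g(0,\lambda,z)=\partial_fF(\lambda,0)(f_0+z)$. Dividing by $s$ before differentiating means $\partial_zg(s,\lambda,z)w=\partial_fF(\lambda,s(f_0+z))w$ and $\partial_\lambda g(s,\lambda,z)=\int_0^1\partial_\lambda\partial_fF(\lambda,\sigma s(f_0+z))(f_0+z)\,d\sigma$ involve only the derivatives assumed in (2); a single parametric IFT in $(\lambda,z)\in\mathbb{R}\times Z$ at $(0,\lambda_0,0)$ then applies, because $(\mu,w)\mapsto\mu\,\partial_\lambda\partial_fF(\lambda_0,0)f_0+L_0w$ is an isomorphism onto $Y$ by the index-zero and transversality hypotheses, and the blow-up parametrization directly yields the branch $sf_0+s\beta(s)$ of the statement. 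Your argument becomes correct as written if one strengthens the hypotheses to $F\in C^2$, which is how several textbooks present the theorem, but under the minimal regularity actually assumed here the one-shot blow-up is the missing idea.
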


The bifurcation approach to prove the existence of traveling waves has been used by different authors (see \cite{alonso2024existence,alonso2024traveling} and the references therein).

After inserting the traveling wave ansatz for a fixed speed $c\in\mathbb{R}$
$$
g(x,t)=\varphi(x-ct)
$$
into \eqref{model}, we obtain
%$$
%F[c,\varphi](\xi)=0, \quad \xi\in[0,2\pi],
%$$
%where
\begin{align}
F[c,\varphi](\xi)&=-c\varphi'+2\mathcal{N}\varphi+\mathcal{N}(\varphi^2)-2\mathcal{Q}(\varphi'' \mathcal{N}\varphi- \varphi\mathcal{N}\varphi'')=0, \quad \xi\in[0,2\pi].\label{definitionF}
\end{align}
Following \cite{alonso2024traveling}, we define the functional spaces
\begin{align*}
X:=\left\{h\in C^{2,\alpha}([0,2\pi]),\quad h(\xi)=\sum_{k\geq 1}h_k\cos(k\xi)\text{ with norm }\|h\|_{X}=\|h\|_{C^{2,\alpha}}\right\},\\
Y:=\left\{h\in C^{1,\alpha}([0,2\pi]),\quad h(\xi)=\sum_{k\geq 1}h_k\sin(k\xi)\text{ with norm }\|h\|_{Y}=\|h\|_{C^{1,\alpha}}\right\},\\
Z:=\left\{h\in C^{3,\alpha}([0,2\pi]),\quad h(\xi)=\sum_{k\geq 1}h_k\sin(k\xi)\text{ with norm }\|h\|_{Z}=\|h\|_{C^{3,\alpha}}\right\}. 
\end{align*}
We observe that the embedding $Z\hookrightarrow Y$ is compact. We now provide the proof of Theorem \ref{theorem2}.

\begin{proof}[Proof of Theorem \ref{theorem2}]
We have to check that the hypotheses of the Crandall-Rabinowitz theorem hold. We start by showing that the operator $F:\mathbb{R}\times X\rightarrow Y$ given in \eqref{definitionF} is well-defined and $\mathscr{C}^1(\mathbb{R}\times X\rightarrow Y)$.

Let us start checking that $F$ is well-defined. First of all, we observe that if $\varphi$ is an even function, $F[c,\varphi]$ is an odd function. Indeed, we observe that
%$$
%\mathcal{N}\phi=\mathcal{Q}\phi',
%$$
%and, then,
$$
\mathcal{N}(\varphi^2)=\mathcal{Q}(2\varphi\varphi').
$$
A standard application of classical Schauder estimates for $\mathcal{Q}$ and $\mathcal{N}$ (see \cite{gilbarg1977elliptic}) shows that 
$$
F[c,\varphi]:X\mapsto Y.
$$
Furthermore, we find that
$$
\|F[c,\varphi]\|_{Y}\leq C\left(\|\varphi\|_{X}^2+\|\varphi\|_{X}\right).
$$
In order to prove that $F\in \mathscr{C}^1(\mathbb{R}\times X\rightarrow Y)$ we have to show that the Gateaux derivative of $F$ is continuous. In fact, the Gateaux derivative of $F$ reads
$$
\partial_\varphi F[c,\varphi]h=-ch'+2\mathcal{N}h+\mathcal{N}(2\varphi h)-2\mathcal{Q}(\varphi'' \mathcal{N}h- \varphi\mathcal{N}h''+h'' \mathcal{N}\varphi- h\mathcal{N}\varphi''),
$$
and verifies
\begin{equation}\label{FC1}
\|\partial_\varphi F[c,\varphi_1]h-\partial_\varphi F[c,\varphi_2]h\|_{Y}\leq C\|h\|_{X}\|\varphi_1-\varphi_2\|_{X}. 
\end{equation}

We observe that the linear operator can be written as
%$$
%L=-ch'+2\mathcal{N}h
%$$
$$
L=L^1+L^2\text{ with }L^1=-ch', L^2=2\mathcal{N}h.
$$
Using again Schauder estimates we find that
$$
L^2:X\rightarrow Z\subset\subset Y\text{ and }L^1:X\rightarrow Y \text{ is an isomorphism. }
$$
Thus, $\partial_\varphi F[c,0]$ is a Fredholm operator of zero index. We now characterize the kernel of the linear operator. For $h\in X$ we find that
$$
\partial_\varphi F[c,0]h=\sum_{j=1}^\infty h_j\sin(jx)\left(2\frac{j}{1+j^2}-cj\right).
$$
Thus, for 
$$
c_n=\frac{2}{1+n^2}
$$
we have that
$$
Ker(\partial_\varphi F[c,0])=\text{span}(\cos(jx))
$$
and, recalling that the linearized operator is Fredholm operator of zero index,
$$
Y/Img(\partial_\varphi F[c,0])=\text{span}(\sin(jx)).
$$
The transversality condition is then satisfied because
$$
\partial_c \partial_\varphi F[c_n,0]h=\sum_{j=1}^\infty h_j\sin(jx)\left(-j\right)
$$
and, if $h\in Ker(\partial_\varphi F[c,0])$, then
$$
\partial_c \partial_\varphi F[c_n,0]h=-nh_n\sin(nx).
$$

We have checked that the Crandall-Rabinowitz theorem can be applied in our equation \eqref{model2}. Fix $m\geq 1$. In order to prove Theorem \ref{theorem2}, we define
\begin{align*}
X_m:=\left\{f\in C^{2,\alpha}([0,2\pi]),\quad f(\xi)=\sum_{k\geq 1}f_k\cos(mk\xi)\text{ with norm }\|f\|_{X_m}=\|f\|_{C^{2,\alpha}}\right\},\\
Y_m:=\left\{f\in C^{1,\alpha}([0,2\pi]),\quad f(\xi)=\sum_{k\geq 1}f_k\sin(mk\xi)\text{ with norm }\|f\|_{Y_m}=\|f\|_{C^{1,\alpha}}\right\}.
\end{align*}
Due to the previous argument (see \cite{alonso2024traveling} for more details), Crandall-Rabinowitz theorem can be applied obtaining Theorem \ref{theorem2}.
\end{proof}

\section{Conclusion}
The derivation and study of right asymptotic models for the magma and conduit equation dates back to the 80's with the works of Whitehead and Helfrich \cite{whitehead1986korteweg} In this paper we have derived a new (and different) asymptotic model called the $g-$model for the conduit and magma equations. We have also establishd that such model is well-posed in a variety of functional spaces.

Furthermore, as noted in Lowman and Hoefer \cite{lowman2013dispersive}, even if in certain regime the dynamics can be accurately captured by the KdV equation, the full model and the real life experiments seem to present dispersive shocks. In this regards, we hope that our model can also describe such behaviour. This remains one interesting open question yet to be studied.

The existence and non-existence of solitary and traveling waves for the conduit and magma equations have been studied, at least, since the 80's with the works of Olson and Christensen \cite{olson1986solitary}, Barcilon and Lovera \cite{barcilon1989solitary} or Takahashi, Sachs and Satsuma \cite{takahashi1990properties}. In this regards, we show that our $g-$model also captures this traveling wave behavior.

Besides the interest in the conduit equation, it is expected that the ideas that we have developed here could pottentiall be applied to other systems such as equation (19) in \cite{carpio2020incorporating}

\section*{Acknowledgments}
The author was supported by the project "Mathematical Analysis of Fluids and Applications" Grant PID2019-109348GA-I00 and "An\'alisis Matem\'atico Aplicado
y Ecuaciones Diferenciales" Grant PID2022-141187NB-I00 funded by MCIN/AEI/10.13039/501100011033/FEDER, UE. 

This publication is part of the project PID2022-141187NB-
I00 funded by MCIN/AEI/10.13039/501100011033.

%\bibliographystyle{plain}
%\bibliography{references}

\end{document}